\numberwithin{equation}{section}
\theoremstyle{plain}
\newtheorem{thm}{Theorem}
\newtheorem*{prop*}{Proposition}
\newtheorem{lem}{Lemma}[section]
\newtheorem{prop}[lem]{Proposition}
\newtheorem{cor}[lem]{Corollary}
\newcommand{\thmref}[1]{Theorem~\ref{#1}}
\newcommand{\lemref}[1]{lemma~\ref{#1}}
\newcommand{\propref}[1]{proposition~\ref{#1}}
\theoremstyle{definition}
\newtheorem{rmk}[lem]{Remark}
\newcommand{\mbb}{\mathbb}
\newcommand{\mbf}{\mathbf}
\newcommand{\q}{\quad}
\newcommand{\qq}{\qquad}
\newcommand{\mc}{\mathcal}
\newcommand{\mrm}{\mathrm}
\begin{document}
\title[Hecke eigenvalues of non Saito--Kurokawa lifts of degree 2]{Large Hecke eigenvalues and an Omega result for non Saito--Kurokawa lifts}
\author{Pramath Anamby}
\address{Department of Mathematics\\
Harish-Chandra Research Institute\\
Prayagraj (Allahabad)- 211019, India.}
\email{pramathav@hri.res.in, pramath.anamby@gmail.com}

\author{Soumya Das}
\address{Department of Mathematics\\ 
Indian Institute of Science\\ 
Bangalore -- 560012, India\\
and Humboldt Fellow, Universit\"{a}t Mannheim.}
\email{soumya@iisc.ac.in, sdas@mail.uni-mannheim.de} 

\author{Ritwik Pal}
\address{Department of Mathematics\\
Indian Institute of Science\\
Bangalore -- 560012, India.}
\email{ritwikpal@iisc.ac.in, ritwik.1729@gmail.com}

\subjclass[2010]{Primary 11F46}
\keywords{Hecke eigenvalues, non Saito-Kurokawa lifts, Omega results}
\begin{abstract}
We prove a result on the distribution of Hecke eigenvalues, $\mu_F(p^r)$ (for $r=1,2$ or $3$) of a non Saito--Kurokawa lift $F$ of degree $2$. As a consequence, we obtain an Omega result for the Hecke eigenvalues for such an $F$, which is the best possible in terms of orders of magnitude.
\end{abstract}
\maketitle

\section{Introduction}
The study of behavior of Hecke eigenvalues has been an interesting as well as an important theme in the theory of modular forms. For example, the distribution of Hecke eigenvalues and Omega results (i.e., `sharp' lower bounds on suitable subsequences) have been studied extensively. In  the case of an elliptic Hecke eigenform, the equidistribution of the eigenvalues is a consequence of Sato--Tate conjecture, which is known from the deep results in \cite{barnet}. However, reasonable Omega results can, in many cases, be proved by less sophisticated techniques. For example it is well known that for holomorphic cusp forms on $\mrm{GL}(2)$, such a result follows from the fact that for $r\le 4$, the symmetric $r$-power $L$-functions have an analytical continuation upto $\mathrm{Re}(s)\ge 1/2$ (see \cite{murty1983oscillations}).

In the case of our interest, namely holomorphic Siegel modular forms of degree $2$, none of the above-mentioned results are known outside of the Maa{\ss} space, even though there are some average results \cite{kim} (vertical Sato--Tate on average) and \cite{tsuzuki} (Sato--Tate on average). There are far fewer results however, when one fixes the modular form. Namely, in the case at hand, the distribution of eigenvalues $\lambda_F(p)$, ($p$ prime, $F$ is a non Saito--Kurokawa lift) can be found in \cite{saha2011prime} and \cite{das2013natural}. 

In this article we study the distribution of Hecke eigenvalues $\mu_F(p^r)$ (for $r=1,2,3$, $p$ being a prime) of a Siegel Hecke eigenform of degree $2$ with full level that is not a Saito--Kurokawa (Maa{\ss}) lift. We do this with an aim of proving an Omega result for Hecke eigenvalues of such an $F$. Let us denote by $S_k^{2,*}$, the Maa{\ss} subspace and by $S_k^{2,\perp}$ subspace of $S_k^{2}$ orthogonal to $S_k^{2,*}$.  Our main result \thmref{th:dis} implies (via \thmref{th:omega}) in particular that the Ramanujan--Petersson conjecture for eigenforms in $S_k^{2,\perp}$ is optimal, in a sense described below.

First, let us describe the main result of this article. Let $F \in S_k^{2,\perp}$ be a Hecke eigenform with eigenvalues $\mu_F(n)$; so that if $T(n)$ denotes the $n$-th (similitude) Hecke operator on $S^2_k$, one has $T(n) F = \mu_F(n) F$ for all $n \ge 1$. The Ramanujan--Petersson conjecture (proved by Weissauer, \cite{weissauer2009endoscopy}) for $F$ implies that (see \cite{das-koh-sen}) for all $n \ge 1$,
\begin{align}\label{rama}
|\mu_F(n) | \le d_5(n) n^{k-3/2} ,
\end{align}
where $d_5(n)$ denotes the number of ways of writing $n$ as the product of $5$ positive integers. We now normalize $\mu_F(n)$ by putting
\begin{align} \label{lnorm}
\lambda_F(n) =\mu_F(n) /n^{k-3/2}.
\end{align}
We call $\mu_F(n) $ to be `large' if $|\lambda_F(n)| >c$ for some $c>1$. Our main theorem then says that there exist a plethora of `large' eigenvalues if we search in the sequence $\{ p^j |  p \text{ prime }, j=1,2,3\}$. By multiplicativity of the Hecke eigenvalues, this would then produce other `large' eigenvalues.

\begin{thm}\label{th:dis}
Let $F\in S_k^{2,\perp}$ be a Hecke eigenform.  Then there exists $c>1$ and $\delta>0$ such that
\begin{equation}
\liminf_{x \to \infty}\frac{\#\{p\le x : \max\{|\lambda_F(p^i)|: i=1,2,3\}\ge c\}}{\pi(x)}>\delta.
\end{equation}
where $\pi(x)$ denotes the number of primes upto $x$.
\end{thm}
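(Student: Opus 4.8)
The plan is to argue by contradiction: suppose that for every $c > 1$ the density of primes $p$ with $\max\{|\lambda_F(p^i)| : i=1,2,3\} \ge c$ is smaller than any fixed $\delta$ — i.e. the set of primes where all three normalized eigenvalues $\lambda_F(p), \lambda_F(p^2), \lambda_F(p^3)$ are bounded by $c$ has density $1 - o_c(1)$, and in fact (pushing $c \to \infty$ appropriately) we can try to extract the statement that $\lambda_F(p), \lambda_F(p^2), \lambda_F(p^3)$ are "small" for a density-one set of primes. The key tool should be the spin $L$-function $L(s, F, \mathrm{spin})$ and the standard $L$-function $L(s, F, \mathrm{std})$ attached to $F$, together with their symmetric-power / Rankin–Selberg type relatives, all of which are known (for $F \in S_k^{2,\perp}$, by Weissauer, Andrianov, etc.) to be nice enough — holomorphic and non-vanishing in a suitable right half-plane, with Euler products whose local factors at $p$ are explicit polynomials in the Satake parameters $\alpha_{0,p}, \alpha_{1,p}, \alpha_{2,p}$. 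The normalized eigenvalues $\lambda_F(p), \lambda_F(p^2), \lambda_F(p^3)$ are explicit symmetric functions of these Satake parameters (via the standard relation between $\mu_F(p^r)$ and the spin Satake parameters), so controlling them controls the Satake parameters themselves.

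**Next I would** set up the Mellin/Rankin–Selberg machinery: take logarithmic derivatives of the relevant $L$-functions at $s = 1$ (after normalization so the edge of the critical strip sits at $\mathrm{Re}(s) = 1$), which by the Wiener–Ikehara / Landau-type Tauberian argument — or more elementarily by the method of \cite{murty1983oscillations} exploiting analytic continuation past $\mathrm{Re}(s) = 1/2$ — yields asymptotic formulas for the Dirichlet-series coefficients summed over primes, e.g. $\sum_{p \le x} \lambda_F(p)^2 \log p$, $\sum_{p \le x} \lambda_F(p^2) \log p$, and so on. Because $F$ is \emph{not} a Saito–Kurokawa lift, the spin $L$-function is \emph{not} divisible by a shifted Riemann zeta factor (equivalently $L(s, F, \mathrm{std})$ has no pole), so these mean values behave like those of a genuine degree-$4$ object: the second moment $\sum_{p\le x}\lambda_F(p)^2\log p$ is asymptotically $\sim x$ (the Rankin–Selberg $L(s, F\times F, \mathrm{spin})$ has a simple pole at $s=1$ with the right residue), while the first moment $\sum_{p\le x}\lambda_F(p^i)\log p$ is $o(x)$ for each $i$. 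If, contrary to the theorem, all of $|\lambda_F(p)|, |\lambda_F(p^2)|, |\lambda_F(p^3)|$ were $\le c$ on a set of primes of density $1 - \eta$ with $\eta$ small, then combining the true size of the second moment against the measure-theoretic bound $\sum_{p \le x}\lambda_F(p)^2 \log p \le c^2 (1-\eta)x + (\text{contribution of the exceptional set})$, and controlling the exceptional contribution by Cauchy–Schwarz against a fourth-moment bound $\sum_{p\le x}\lambda_F(p)^4 \log p \ll x$ (which again follows from holomorphy/non-vanishing of the relevant fourth symmetric-power-type $L$-function in the needed region, available since we are in degree $2$ and $r \le 3$ keeps us within reach of known continuation), would force $c$ to be bounded below by an absolute constant $> 1$ once $\eta$ is small — a contradiction. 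Working out the precise linear-algebra relation among the first, second, third and fourth moments of $(\lambda_F(p), \lambda_F(p^2), \lambda_F(p^3))$ that pins down the constant $c$ and the density $\delta$ is the bookkeeping core of the argument, and this is presumably exactly what \thmref{th:dis} quantifies.

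**The main obstacle** I anticipate is not the soft Tauberian input but making sure that \emph{all} the auxiliary $L$-functions one needs — in particular the ones encoding the higher moments $\sum \lambda_F(p^i)\lambda_F(p^j)$ for $i,j \in \{1,2,3\}$ and the fourth moment of $\lambda_F(p)$ — genuinely have the analytic continuation and non-vanishing up to (and a bit past) $\mathrm{Re}(s) = 1$ that the argument requires, \emph{without} invoking unproven automorphy of high symmetric powers of a $\mathrm{GSp}_4$ form. The saving grace, and the reason the theorem restricts to $r \le 3$, should be that these are all built from $\mathrm{Spin}$ (a $\mathrm{GL}_4$-object, hence Rankin–Selberg up to degree $16$ is available) and $\mathrm{std}$ (a $\mathrm{GL}_5$-object) via classical Andrianov/Shimura identities, so every $L$-function in sight factors through products of Rankin–Selberg convolutions of degree-$\le 4$ pieces, whose analytic properties are classical. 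A secondary delicate point is ruling out degenerate coincidences — e.g. the possibility that $\lambda_F(p)$ is large but $\lambda_F(p^2), \lambda_F(p^3)$ conspire to stay small, or vice versa — which is precisely why the theorem takes the \emph{maximum} over $i = 1,2,3$ rather than fixing a single $i$; the positivity of the relevant Rankin–Selberg residues should guarantee that at least one of the three moments is "too big" to be explained by a density-one bounded set, but verifying this requires the explicit evaluation of those residues in terms of $k$ and the (non-)Saito–Kurokawa hypothesis.
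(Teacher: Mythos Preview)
Your central claim---that the second moment $\sum_{p\le x}\lambda_F(p)^2\log p\sim x$ together with a fourth-moment/Cauchy--Schwarz bound on the exceptional set ``would force $c$ to be bounded below by an absolute constant $>1$''---is not correct as stated. The scenario $|\lambda_F(p)|=1$ for (almost) every $p$ is fully compatible with $\sum_{p\le x}\lambda_F(p)^2\sim\pi(x)$, so the second moment of $\lambda_F(p)$ alone yields only $c\ge 1$. (Note too that no fourth moment is needed for the exceptional set: Weissauer's Ramanujan bound already gives $|\lambda_F(p)|\le 4$.) This is not a technicality; it is exactly the case the paper's argument has to work hardest to rule out, and your proposal does not contain a mechanism to do so. Your ``obstacle'' paragraph gestures at higher moments of $(\lambda_F(p),\lambda_F(p^2),\lambda_F(p^3))$, but never names a concrete second input whose asymptotic is violated in the degenerate scenario.

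The missing ingredient is the \emph{standard} $L$-function. Writing $b(p)$ for its $p$-th Dirichlet coefficient, one has the identity
\[
\lambda_F(p)^2-\lambda_F(p^2)=b(p)+1+p^{-1},
\]
and, since $\Pi_5$ is cuspidal on $\mathrm{GL}_5$, a second Rankin--Selberg PNT gives $\sum_{p\le x}b(p)^2\sim\pi(x)$. The paper then runs a genuine case split: either many $|\lambda_F(p)|>1+\eta_1$ (done), or else $|\lambda_F(p)|\approx 1$ for $\ge 98\%$ of primes; in the latter case the identity forces $|\lambda_F(p^2)|\approx|b(p)|$, and the \emph{second} PNT forces many $|b(p)|$ to be bounded away from $0$. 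If many $|b(p)|>2.1$ one gets $|\lambda_F(p^2)|>1.09$; otherwise many $|b(p)|\in[6/7,2.1]$, and the explicit Hecke recursion $\lambda_F(p^3)=\lambda_F(p)\bigl(2\lambda_F(p^2)-\lambda_F(p)^2+1+p^{-1}\bigr)$ converts $|\lambda_F(p)|\approx 1$, $|\lambda_F(p^2)|>2/3$ into $|\lambda_F(p^3)|>1.02$. Two PNTs (for $\Pi_4\times\Pi_4$ and $\Pi_5\times\Pi_5$) and two Hecke identities are the whole analytic input; no fourth moments, no symmetric-power continuation beyond what $\mathrm{GL}_4$ and $\mathrm{GL}_5$ Rankin--Selberg already give.
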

Note that this would mean that for every large $x$, there exists an $i=i_x\in\{1,2,3\}$ such that $\#\{p\le x: |\lambda(p^i)|>c\}>\delta\cdot \pi(x)$. This immediately gives us the following corollary.
\begin{cor}\label{cor:dis}
For at least one $j \in \{1,2,3\}$, the following statement is true: there exist constants $c>1$ and $\delta>0$ such that
\begin{align}
\limsup_{x \to \infty} \frac{\# \{p\le x : |\lambda(p^j)|\ge c\}}{\pi(x)} >\delta.
\end{align} 
\end{cor}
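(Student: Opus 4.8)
The plan is to derive the corollary directly from \thmref{th:dis} by a union bound combined with a pigeonhole over the three exponents $i\in\{1,2,3\}$; no input beyond \thmref{th:dis} is required.

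First I would fix $c>1$ and $\delta>0$ as furnished by \thmref{th:dis}, and for $i\in\{1,2,3\}$ set
\[
A_i(x):=\#\{p\le x:\ |\lambda_F(p^i)|\ge c\}.
\]
Since the inequality $\max\{|\lambda_F(p^i)|:i=1,2,3\}\ge c$ holds precisely when $|\lambda_F(p^i)|\ge c$ for at least one index $i$, the set of primes counted in \thmref{th:dis} is contained in the union of the three sets counted by $A_1(x),A_2(x),A_3(x)$, and therefore
\[
\#\{p\le x:\ \max\{|\lambda_F(p^i)|:i=1,2,3\}\ge c\}\ \le\ A_1(x)+A_2(x)+A_3(x)
\]
for every $x$. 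Dividing by $\pi(x)$, using that $\liminf$ of a smaller sequence is dominated by $\limsup$ of a larger one, and that the $\limsup$ of a finite sum is at most the sum of the $\limsup$s, I get
\begin{align*}
\sum_{i=1}^{3}\limsup_{x\to\infty}\frac{A_i(x)}{\pi(x)}
&\ \ge\ \limsup_{x\to\infty}\frac{A_1(x)+A_2(x)+A_3(x)}{\pi(x)}\\
&\ \ge\ \liminf_{x\to\infty}\frac{\#\{p\le x:\ \max\{|\lambda_F(p^i)|:i=1,2,3\}\ge c\}}{\pi(x)}\ >\ \delta.
\end{align*}
By pigeonhole there is some $j\in\{1,2,3\}$ with $\limsup_{x\to\infty}A_j(x)/\pi(x)>\delta/3$, and taking this $j$, the same $c>1$, and $\delta/3>0$ in place of $\delta$ establishes the corollary.

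There is essentially no obstacle: the argument is purely formal once \thmref{th:dis} is available. The only point worth flagging is that the index $j$ produced is ineffective and may depend on $F$ (and on $c,\delta$), which is exactly why the statement is phrased as ``for at least one $j\in\{1,2,3\}$'' rather than for a specified value; singling out one universal $j$ would require the finer information about the individual distributions of $\lambda_F(p)$, $\lambda_F(p^2)$ and $\lambda_F(p^3)$ that \thmref{th:dis} does not assert.
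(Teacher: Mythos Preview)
Your argument is correct and is essentially the same pigeonhole deduction the paper uses: the paper's proof is the single sentence before the corollary, noting that \thmref{th:dis} forces, for every large $x$, some $i_x\in\{1,2,3\}$ with $\#\{p\le x:|\lambda_F(p^{i_x})|\ge c\}$ large, whence one index recurs along a sequence tending to infinity. Your presentation via the subadditivity of $\limsup$ is a clean formalization of the same idea, yielding $\delta/3$ instead of $\delta$, which is immaterial.
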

The main point to note here is that $c >1$ (so that we are dealing with `large' eigenvalues); analogous assertions when $c<1$ follow already from \cite{das2013natural}.

The main tools used in the proof of Theorem \ref{th:dis} are the prime number theorems for the spinor and the standard $L$-functions (denoted as $Z(F,s)$ and $Z^{st}(F,s)$ respectively) attached to $F$ and the Hecke relations among the Hecke eigenvalues. Also crucially used in the proof is the existence of a functorial transfer from $\mrm{GSp}(4)$ to $\mrm{GL}(4)$ from the work of \cite{pitaleetal}, which enables us to use the analytic machinery from $\mrm{GL}(4)$ automorphic representations.

Let us now discuss some applications of \thmref{th:dis} towards Omega results on the sequence of eigenvalues $\{ \lambda_F(n) \}$. In particular, we would like to know if \eqref{rama} is the best possible. This means two things: first, the exponent $k-3/2$ should be the best possible and second, the order of magnitude of the slowly growing function $d_5(n)$ should also be the best possible. Of these, the assertion about the exponent is true and follows from \cite{das2013natural}. It should also follow by considering the Rankin--Selberg convolution of $Z(F,s)$ with itself, and arguing with the location of poles (cf. \cite[remark~5.3]{das-boech}). More subtle is the slowly growing function, and we prefer to treat these functions simultaneously.

To set the stage, let us recall some facts about this type of questions in the case of elliptic modular forms and Saito--Kurokawa lifts. In the case of elliptic modular forms, the answer to the question of sharpness of the Ramanujan--Petersson conjecture
\[
\text{(Deligne's bound):  } \q |a_g(m)| \le d_2(n) n^{(k-1)/2}, \q n\ge 1
\] 
(for a newform $g$) is that it is the best possible in terms of the exponent $(k-1)/2$; so the question boils down to understanding the behavior of the function $a_g(m) / m^{(k-1)/2}$. One knows the  following $\Omega$-type results about this function. In \cite{rankin1973theta} Rankin proved, essentially exploiting the prime number theorem for a Hecke eigenform $g \in S_k$, that it is not bounded:
\begin{equation} \label{om1}
\limsup_{m \to \infty} \frac{a_g(m)}{m^{(k-1)/2}} = \infty.
\end{equation}
Even a stronger result is known due to Ram Murty (cf. \cite{murty1983oscillations}, using the holomorphy of suitable symmetric power $L$-functions):
\begin{equation} \label{om2}
\frac{a_g(m)}{m^{(k-1)/2}} = \Omega \left(  \exp \left(\frac{\alpha \log m}{  \log \log m }\right)  \right) \qq (\alpha>0).
\end{equation}
It is known that the Saito--Kurokawa lifts of degree 2, fail to satisfy (\ref{rama}). Instead they satisfy
\begin{equation}
\lambda_F(n)\ll_\epsilon n^{k-1+\epsilon}, \q \text{for any } \epsilon>0. 
\end{equation} 
An Omega result for such an $F\in S_k^{2}$ was obtained by Das (see \cite{das2014omega}) and was later improved by Gun et al (see \cite{gun2018hecke}).

Here and in the rest of the paper, for arithmetical functions $f(n),g(n)$ with $g(n)>0$ for all $n \ge 1$, we use the notation
\begin{equation} \label{omegadef}
f(n)= \Omega_{\pm} (g(n))  \q \text{ if and only if } \q \limsup_{n \to \infty} \frac{ f(n) }{g(n)} >0 \q (\text{resp. } \liminf_{n \to \infty}  \frac{ f(n) }{g(n)} <0).
\end{equation}
In more simple terms, this just means that $|f(n) |/g(n)$ is bounded away from zero along a subsequence of the set of natural numbers $\mbb N$. Moreover we write $f(n)= \Omega (g(n))$ if $|f(n)|= \Omega_{+} (g(n))$.

Using the corollary \ref{cor:dis} of \thmref{th:dis}, we can deduce easily the following Omega result. 

\begin{thm}\label{th:omega}
Let $F$ be as in \thmref{th:dis}. Then there exists a constant $c>0$ such that
\begin{equation}
\lambda_F(n)=\Omega_{\pm}\left(\exp\left(\frac{c\log n}{\log\log n}\right)\right).
\end{equation}
\end{thm}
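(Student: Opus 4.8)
The plan is to derive the $\Omega_\pm$-result directly from Corollary \ref{cor:dis} by a Dirichlet-series/convolution argument of the type used by Rankin and Ram Murty. Fix the index $j\in\{1,2,3\}$ furnished by Corollary \ref{cor:dis}, so that $|\lambda_F(p^j)|\ge c>1$ for a positive proportion of primes $p$. The first step is to observe that, by multiplicativity of the normalized Hecke eigenvalues $\lambda_F$, for a squarefree integer $m=p_1p_2\cdots p_t$ all of whose prime factors lie in this positive-density set, one has $\lambda_F(m^j)=\prod_{i=1}^t\lambda_F(p_i^{\,j})$, and hence $|\lambda_F(m^j)|\ge c^t$. (Here one must check that $\lambda_F(p^j)$ and $\lambda_F(q^j)$ for distinct primes really do multiply to $\lambda_F((pq)^j)$; this follows from the Hecke relations for $\mathrm{GSp}(4)$ since $p^j$ and $q^j$ are coprime.) So the problem reduces to: the set of primes $p$ with $|\lambda_F(p^j)|\ge c$ has positive lower density, therefore we can find squarefree $m\le x$ with $\omega(m)\gg \log x/\log\log x$ prime factors, all drawn from this set.

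The second step is the standard extremal construction. Let $S$ be the set of primes with $|\lambda_F(p^j)|\ge c$; by Corollary \ref{cor:dis} (applied along the subsequence realizing the $\limsup$, or after passing to a set of positive lower density, which one can arrange by a routine argument) $S$ has positive density, so $\sum_{p\in S,\,p\le y}1\gg \pi(y)\gg y/\log y$. Taking $m$ to be the product of the first $t$ primes in $S$, the prime number theorem (for $S$, or crudely just $\sum_{p\in S,\,p\le y}\log p\gg y$) gives $\log m\asymp p_t\asymp t\log t$, whence $t\gg \log m/\log\log m$. Combined with $|\lambda_F(m^j)|\ge c^t=\exp(t\log c)$ and $n:=m^j$ (note $\log n=j\log m\asymp\log m$ and $\log\log n\asymp\log\log m$), this yields
\begin{equation}
|\lambda_F(n)|\ge \exp\!\left(\frac{c'\log n}{\log\log n}\right)
\end{equation}
for infinitely many $n$ and some $c'>0$ depending on $c$ and $j$. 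This gives $\lambda_F(n)=\Omega\big(\exp(c'\log n/\log\log n)\big)$.

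The third step is upgrading $\Omega$ to $\Omega_\pm$, i.e. producing sign changes. Here I would use that $\lambda_F$ is real-valued (the Hecke eigenvalues of a Siegel eigenform are totally real) together with the fact, coming from the prime number theorem for the spinor $L$-function $Z(F,s)$ and for the standard $L$-function $Z^{st}(F,s)$ via the $\mathrm{GL}(4)$ transfer, that $\lambda_F(p)$ (and likewise $\lambda_F(p^2),\lambda_F(p^3)$ through the Hecke relations) takes both signs with positive frequency — indeed the relevant argument in the proof of Theorem \ref{th:dis} should already distinguish the sets where $\lambda_F(p^i)$ is large-positive versus large-negative. Thus one can choose, for each target sign $\varepsilon\in\{+,-\}$, the multiset of primes so that the product $\prod\lambda_F(p_i^{\,j})$ has sign $\varepsilon$ (e.g. by including an appropriate parity of primes on which $\lambda_F(p^j)<-c$), giving infinitely many $n$ with $\lambda_F(n)>\exp(c'\log n/\log\log n)$ and infinitely many with $\lambda_F(n)<-\exp(c'\log n/\log\log n)$.

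The main obstacle I anticipate is not the combinatorics of the extremal construction (which is routine once one has a positive-density set of "good" primes) but rather ensuring that Corollary \ref{cor:dis} can be used with a genuine \emph{positive lower density}, and with control of \emph{signs}, rather than merely a $\limsup$ along some sequence. If Corollary \ref{cor:dis} only supplies a $\limsup$, one still gets $\Omega$ by working along that sequence of $x$'s, but to get $\Omega_\pm$ cleanly one wants to know that within the good primes $\le x$ a positive proportion have $\lambda_F(p^j)>c$ \emph{and} a positive proportion have $\lambda_F(p^j)<-c$ — or at least that one of these two holds along a subsequence, which by symmetry of the argument suffices to produce both signs (take an odd versus even number of negative factors). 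I would therefore revisit the proof of Theorem \ref{th:dis} to extract the sign information, which it naturally carries since the argument there separates eigenvalues by where they sit relative to $\pm c$.
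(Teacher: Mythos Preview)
Your extremal construction --- take $n$ to be the $j$-th power of the product of the first $t$ primes in the ``good'' set $S$, estimate $t\gg \log n/\log\log n$, and conclude $|\lambda_F(n)|\ge c^{\,t}$ --- is exactly what the paper does. So for the $\Omega$-part there is no discrepancy.

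The gap is in your treatment of $\Omega_\pm$. You propose to ``revisit the proof of Theorem~\ref{th:dis} to extract the sign information, which it naturally carries since the argument there separates eigenvalues by where they sit relative to $\pm c$.'' It does not: that proof works exclusively with the absolute values $|\lambda_F(p^i)|$ and $|b(p)|$, and never distinguishes the signs of $\lambda_F(p^j)$. Nothing in the PNT inputs (\lemref{lem:PNT}) or the Hecke relations used there forces a positive proportion of the large eigenvalues to be negative (or positive). In particular, your fallback ``take an odd versus even number of negative factors'' only works in the case where $B_N^-=\{p\le N:\lambda_F(p^j)\le -c\}$ is itself of size $\gg\pi(N)$; it says nothing if, along the subsequence of $N$'s you are forced to work with, the large set happens to be entirely (or overwhelmingly) contained in $B_N^+$.

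The paper's remedy is an external input you did not mention: it invokes Kohnen's result \cite{kohnen2007sign} that there exists some fixed $n_0$ with $\lambda_F(n_0)<0$. Then, splitting the good set as $B_N=B_N^+\cup B_N^-$, one of the two halves has size $\ge\tfrac{\delta}{2}\pi(N)$. If $B_N^-$ is large, your odd/even trick gives both signs. If $B_N^+$ is large, the product over $B_N^+$ gives $\Omega_+$, and multiplying by the fixed $n_0$ (after discarding the finitely many primes dividing $n_0$) flips the sign and gives $\Omega_-$. This is the missing ingredient in your plan.
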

Actually the above Omega result is realized over a certain subset of fourth power-free integers. It is easy to check that on this subset $\log d_5(n)$ and the function $\log n/\log\log n$ are same asymptotically upto the constant $c$. For example, if $p$ is a prime, then $d_5(p) = 5$. Therefore putting $a_N = \prod_{p \le N} p$, $\log d_5(a_N) = \log 5\cdot \log(\pi(N)) \sim A\cdot \log a_N/\log \log a_N $. So our result can also be presented as $\lambda_F(n) = \Omega_{\pm}( d_5(n)^\omega)$ for some $\omega>0$. At any rate this not only proves the optimality of the exponent in \eqref{rama}, but that the slowly growing function is also the same upto a suitable exponent.

It is also interesting to ask for Omega results in the context of Fourier coefficients; this has recently been addressed in \cite{das-omega}. It is not immediately clear how the results of this article influence those of \cite{das-omega} and vice-versa.

\subsection*{Acknowledgments}
{\small S.D. was supported by a Humboldt Fellowship from the  Alexander von Humboldt Foundation at Universit\"{a}t Mannheim during the preparation of the paper, and thanks both for the generous support and for providing excellent working conditions. He also thanks IISc, Bangalore, DST (India) and UGC centre for advanced studies for financial support. During the preparation of this work S.D. was supported by a MATRICS grant MTR/2017/000496 from DST-SERB, India. 

P.A. and R.P. were supported by IISc Research Associateship during the preparation of this article and thank IISc, Bangalore for the support.}

\section{Notation and Preliminaries}
First we recall some basic facts about Siegel cusp forms of degree $2$ and the classical $L$-functions attached to them. Let $F\in S_k^2$ be an eigenform for all Hecke operators $T(n)$ which is not a Saito--Kurokawa lift. Let $\{\lambda_F(n)\}$ (normalized as in \eqref{lnorm}) be the normalized eigenvalues of $F$. We refer the reader to \cite{andrianov1974euler} for more details.

\subsection*{Some L-functions attached to $F$} The degree $4$ spinor zeta function attached to $F$ is given by 
\begin{equation*}
Z(F,s)=\prod_{p}Z_p(F,p^{-s}),
\end{equation*}
where the $p$-th Euler factor $Z_p(F,\cdot)$ of $Z(F, \cdot)$ is given by
\begin{align}\label{spinorp}
Z_p(F,t)^{-1}&=(1-\alpha_{0,p} t)(1-\alpha_{0,p}\alpha_{1,p} t)(1-\alpha_{0,p}\alpha_{2,p} t)(1-\alpha_{0,p}\alpha_{1,p}\alpha_{2,p} t)\\
&=1-\lambda_F(p) t+(\lambda_F(p)^2-\lambda_F(p^2)-p^{-1})t^2-\lambda_F(p) t^3+t^4.\nonumber
\end{align}
The degree $5$ standard $L$-function attached to $F$ is given by
\begin{equation*}
Z^{st}(F,s)=\prod_{p}Z_p^{st}(F,p^{-s}),
\end{equation*}
where
\begin{equation}\label{standardp}
Z_p^{st}(F,t)^{-1}=(1-t)(1-\alpha_{1,p}t)(1-\alpha_{2,p}t)(1-\alpha_{1,p}^{-1}t)(1-\alpha_{2,p}^{-1}t).
\end{equation}
Here $\alpha_{0,p},\alpha_{1,p},\alpha_{2,p}$ denote the Satake $p$-parameters attached to $F$ and satisfy
\begin{equation}\label{alphaprod}
\alpha_{0,p}^2\alpha_{1,p}\alpha_{2,p}=1.
\end{equation}
By virtue of the Ramanujan--Petersson conjecture proved by Weissauer (see \cite{weissauer2009endoscopy}) we have
\begin{equation*}
|\alpha_{0,p}|=|\alpha_{1,p}|=|\alpha_{2,p}|=1,
\end{equation*}
for all primes $p$. Moreover the Hecke eigenvalues are related to the spinor zeta function by
\begin{equation} \label{lamspin}
\sum_{n \ge 1} \lambda_F(n) n^{-s} = \frac{Z(F,s)}{\zeta(2s+1)}.
\end{equation}

Let the Dirichlet series of $Z^{st}(F,s)$ be denoted as
\begin{equation*}
Z^{st}(F,s)=\sum_{n\ge 1}\frac{b(n)}{n^s}.
\end{equation*}
Then by expanding (\ref{standardp}) we have
\begin{equation}\label{bpexp}
b(p)=1+\alpha_{1,p}+\alpha_{2,p}+\alpha_{1,p}^{-1}+\alpha_{2,p}^{-1}.
\end{equation}

From \cite{pitaleetal} we know that there exist cuspidal automorphic representations $\Pi_4$ of $\mrm{ GL}_4(\mathbb{A})$ and $\Pi_5$ of $\mrm{ GL}_5(\mathbb{A})$ such that
\begin{equation*}
Z(F,s)=L(\Pi_4,s)\qq\text{and}\qq Z^{st}(F,s)=L(\Pi_5,s).
\end{equation*}
Then using the prime number theorem (PNT) for Rankin--Selberg L functions  $L(\Pi_4\times\Pi_4,s)$ and $L(\Pi_5\times\Pi_5,s)$ (see \cite{liu-ye}), we obtain
\begin{lem}\label{lem:PNT}
For all large $X$
\begin{enumerate}
\item $\underset{p\le X}{\sum}\lambda_F(p)^2\log p= X+O(X\exp(-\kappa_1\sqrt{\log X})).$
\item $\underset{p\le X}{\sum}b(p)^2\log p= X+O(X\exp(-\kappa_2\sqrt{\log X})).$
\end{enumerate}
Here $\kappa_1, \kappa_2>0$.
\end{lem}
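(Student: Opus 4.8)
The plan is to deduce both assertions from the prime number theorem for Rankin--Selberg $L$-functions of \cite{liu-ye}, applied to $L(\Pi_4\times\Pi_4,s)$ for part (1) and to $L(\Pi_5\times\Pi_5,s)$ for part (2). Two facts are needed as input. First, $\Pi_4$ and $\Pi_5$ are self-dual: the Euler factor \eqref{spinorp} is palindromic, which is exactly the relation \eqref{alphaprod}, so $\widetilde{\Pi_4}\cong\Pi_4$; and \eqref{standardp} is visibly invariant under $\alpha_{i,p}\mapsto\alpha_{i,p}^{-1}$, so $\widetilde{\Pi_5}\cong\Pi_5$. Hence, by the theory of Rankin--Selberg $L$-functions (Jacquet--Shalika, Moeglin--Waldspurger), $L(\Pi_i\times\Pi_i,s)=L(\Pi_i\times\widetilde{\Pi_i},s)$ has, for $i=4,5$, a simple pole at $s=1$ as its only pole in the region $\re(s)\ge1$. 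Second, one needs to identify the von Mangoldt-type coefficients of these $L$-functions, which is done below.

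For part (1), write $-\tfrac{L'}{L}(\Pi_4\times\Pi_4,s)=\sum_{n\ge1}\Lambda_{\Pi_4\times\Pi_4}(n)n^{-s}$. Since $L(\Pi_4\times\Pi_4,s)$ has a simple pole at $s=1$, its logarithmic derivative has a simple pole there with residue $1$, so the PNT of \cite{liu-ye} gives $\sum_{n\le X}\Lambda_{\Pi_4\times\Pi_4}(n)=X+O\!\left(X\exp(-\kappa_1\sqrt{\log X})\right)$ for some $\kappa_1>0$. Comparing Dirichlet coefficients (all finite places are unramified, as $F$ has full level), the $p$-th coefficient of $L(\Pi_4,s)=Z(F,s)$ equals $\lambda_F(p)$ by \eqref{spinorp}, and therefore $\Lambda_{\Pi_4\times\Pi_4}(p)=|\lambda_F(p)|^2\log p=\lambda_F(p)^2\log p$, since the Hecke eigenvalues are real. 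The contribution of the higher prime powers $n=p^m$, $m\ge2$, is $O(X^{1/2}\log X)$ because $|\alpha_{i,p}|=1$, and is absorbed into the error term; this yields (1).

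Part (2) is obtained in exactly the same way with $\Pi_5$ in place of $\Pi_4$. The PNT of \cite{liu-ye} applied to $L(\Pi_5\times\Pi_5,s)$ gives $\sum_{n\le X}\Lambda_{\Pi_5\times\Pi_5}(n)=X+O\!\left(X\exp(-\kappa_2\sqrt{\log X})\right)$; by \eqref{bpexp} the $p$-th coefficient of $L(\Pi_5,s)=Z^{st}(F,s)$ is $b(p)=1+2\re(\alpha_{1,p})+2\re(\alpha_{2,p})\in\mbb R$, so $\Lambda_{\Pi_5\times\Pi_5}(p)=b(p)^2\log p$; and the remaining prime-power terms again contribute $O(X^{1/2}\log X)$.

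The step that does the real work, and for which I would simply invoke \cite{liu-ye}, is the availability of a de la Vall\'ee Poussin type zero-free region for $L(\Pi_i\times\Pi_i,s)$ to the left of $\re(s)=1$, including the exclusion of a possible Landau--Siegel exceptional real zero; this is precisely where self-duality and the simple pole of $L(\Pi_i\times\Pi_i,s)$ at $s=1$ enter — the pole repelling nearby zeros in the familiar Hoffstein--Ramakrishnan / Liu--Ye manner — and it is what produces the error term $\exp(-\kappa_i\sqrt{\log X})$. Everything else, in particular passing from $\sum_{n\le X}\Lambda(n)$ to the sum over primes, is the routine bookkeeping indicated above.
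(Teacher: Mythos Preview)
Your proposal is correct and follows exactly the approach the paper indicates: invoke the PNT for Rankin--Selberg $L$-functions from \cite{liu-ye} applied to $L(\Pi_4\times\Pi_4,s)$ and $L(\Pi_5\times\Pi_5,s)$, identify the prime-level coefficients as $\lambda_F(p)^2\log p$ and $b(p)^2\log p$, and discard the higher prime powers via the Ramanujan bound. The paper itself gives no further details beyond citing \cite{liu-ye}, so your write-up is in fact a fleshed-out version of what the authors had in mind.
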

\subsection*{Hecke relations} The eigenvalues $\lambda_F(p^n)$ of $F$ satisfy the following recursive relation (from \cite[Theorem 1.3.2]{andrianov1974euler}).
\begin{equation}\label{heckerelations}
\lambda_F(p^n)=\lambda_F(p)\Big(\lambda_F(p^{n-1})+\lambda_F(p^{n-3})\Big)-\lambda_F(p^{n-2})\Big(\lambda_F(p)^2-\lambda_F(p^2)-\frac{1}{p}\Big)-\lambda_F(p^{n-4}).
\end{equation}
We also need the relation between the eigenvalues $\lambda_F(p)$, $\lambda_F(p^2)$ and the Dirichlet coefficients $b(p)$ of the standard $L$-function $Z^{st}(F,s)$. Let $\beta_{1,p}=\alpha_{0,p}$, $\beta_{2,p}=\alpha_{0,p}\alpha_{1,p}$, $\beta_{3,p}=\alpha_{0,p}\alpha_{2,p}$ and $\beta_{4,p}=\alpha_{0,p}\alpha_{1,p}\alpha_{2,p}$. The $p$-th Euler factor of $Z(F,s)$ can be written in terms of $\beta_{i,p}$s as follows.
\begin{equation*}
Z_p(F,t)^{-1}=\prod_{1\le i\le 4}(1-\beta_{i,p} t).
\end{equation*}
By expanding the product and using (\ref{spinorp}) we get the following identities. Namely
\begin{equation} \label{lambeta}
\lambda_F(p)=\sum_{1\le i\le 4}\beta_{i,p}
\end{equation}
and 
\begin{equation}\label{lambpp2}
\lambda_F(p)^2-\lambda_F(p^2)-p^{-1}=\sum_{1\le i<j\le 4}\beta_{i,p}\beta_{j,p}.
\end{equation}
Combining these identities we get
\begin{equation}\label{lamp^2}
\lambda(p^2)=\sum_{1\le i\le j\le 4}\beta_{i,p}\beta_{j,p}-\frac{1}{p}.
\end{equation}
From \eqref{lambeta}, \eqref{lamp^2} and \eqref{bpexp} one obtains the following estimates.
\begin{equation}\label{lamest}
|\lambda_F(p)| \le 4, \q |\lambda_F(p^2)| \le 10+1/p, \q |b(p)| \le 5.
\end{equation}

We also need the relation between the eigenvalues $\lambda(p), \lambda(p^2)$ and the Dirichlet coefficient $b(p)$ of the standard L-function. We get the following relation by using the identities  (\ref{alphaprod}), (\ref{bpexp}), \eqref{lambeta} and  \eqref{lambpp2}.
\begin{equation}\label{bprelation}
\lambda_F(p)^2-\lambda_F(p^2)=b(p)+1+\frac{1}{p}.
\end{equation}

For $a<b$ and $i=1,2$ or $3$, we consider the following subsets of $\mc P$, the set of prime numbers.
\begin{equation}
V_i(a,b;x):=\{p\le x : a\le |\lambda(p^i)|<b\}
\end{equation}
and we denote the set  $\{p\le x :|\lambda(p^i)|\ge a\}$ by $V_i(a,\bullet;x)$. Let us put
\[ \eta_1=10^{-10} \text{  and  }  \eta_2=1/10. \] 

\section{Proof of Theorem \ref{th:dis}}
In this section we collect various implications arising from the asymptotic formulas of the PNT for $Z(F,s)$ and $Z^{st}(F,s)$ (cf. \lemref{lem:PNT}) in combination with the Hecke relations \eqref{bprelation} and the bounds on the eigenvalues \eqref{lamest}. The results are in the form of lower bounds on the sets $V_j(a,b;x)$ under suitable hypotheses.

Note that using the partial summation one can deduce (from lemma \ref{lem:PNT}) that
\begin{equation}\label{PNT1}
\sum_{p \le x} \lambda_F(p)^2  = \frac{x}{\log x} + o\left( \frac{x}{\log x}\right)
\end{equation}
and similarly
\begin{equation}\label{pntstd}
\sum_{p\le x}b^2(p)=\frac{x}{\log x}+o\left(\frac{x}{\log x}\right).
\end{equation}
\subsection{Choice of $X_0$:}\label{subX0}
We choose a large $X_0$ such that the following hold (note here that $X_0$ may be dependent on the weight $k$).
\begin{enumerate}
\item  Let $M(x)$ and $E_i(x)$ ($i=1,2$) denote the main and error terms in (\ref{PNT1}) and (\ref{pntstd}) respectively. Then, for $x> X_0$, $E_i(x)\le 10^{-6}\cdot M(x)$ for $i=1,2$.
\item  For $x\ge X_0$, $\pi(10^4)\le 10^{-6}\cdot \pi(x)$.
\item $\frac{999}{1000}\cdot\pi(x)\le\frac{x}{\log x}$ for all $x> X_0$.
\end{enumerate}

With this choice of $X_0$, we have the following results.
\begin{prop}\label{prop:proportion1}
For any $x\ge X_0$, one of the following is true.

(i) \, For some $\delta_1\ge 10^{-5}$, \, $|V_1(1+\eta_1, \bullet ;x)|  \ge \delta_1\cdot \pi(x)$.

(ii) \, For some $\delta_2\ge 98/100$,\,
$|V_1(1-\eta_2,1+\eta_1;x)| \ge \delta_2\cdot\pi(x)$.
\end{prop}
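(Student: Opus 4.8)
The strategy is to run a three-way split on the size of $\lambda_F(p)$, driven by the prime number theorem \eqref{PNT1} for $Z(F,s)$ and the trivial bound $|\lambda_F(p)|\le 4$ from \eqref{lamest}. If (i) holds there is nothing to prove, so suppose it fails; unwinding the definition this means $|V_1(1+\eta_1,\bullet;x)| < 10^{-5}\,\pi(x)$, and the goal is to force (ii).

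First I would partition the primes $p\le x$ into the ``small'' set $A := V_1(0,1-\eta_2;x)$, the ``medium'' set $B := V_1(1-\eta_2,1+\eta_1;x)$ and the ``large'' set $C := V_1(1+\eta_1,\bullet;x)$, so that $|A|+|B|+|C| = \pi(x)$. Bounding $\lambda_F(p)^2$ by $(1-\eta_2)^2$ on $A$, by $(1+\eta_1)^2$ on $B$ and by $16$ on $C$ gives
\[
\sum_{p\le x}\lambda_F(p)^2 \;\le\; (1-\eta_2)^2\,|A| + (1+\eta_1)^2\,|B| + 16\,|C|,
\]
while \eqref{PNT1} together with conditions (1) and (3) in the choice of $X_0$ gives, for $x\ge X_0$,
\[
\sum_{p\le x}\lambda_F(p)^2 \;\ge\; (1-10^{-6})\,M(x) \;\ge\; (1-10^{-6})\,\tfrac{999}{1000}\,\pi(x).
\]
Writing $|A| = a\,\pi(x)$, $|B| = b\,\pi(x)$, $|C| = c\,\pi(x)$ with $a+b+c = 1$, eliminating $a = 1-b-c$, and combining the two displays, one obtains
\[
(1-10^{-6})\,\tfrac{999}{1000} \;\le\; (1-\eta_2)^2 + \big((1+\eta_1)^2-(1-\eta_2)^2\big)\,b + \big(16-(1-\eta_2)^2\big)\,c .
\]

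Finally I would insert the failure of (i), namely $c < 10^{-5}$, and the numerical values $\eta_1 = 10^{-10}$, $\eta_2 = 1/10$, and solve this linear inequality for $b$; it yields $b > 98/100$ (with plenty to spare — in fact $b > 0.99$), which is exactly assertion (ii) with $\delta_2 = 98/100$.

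The proof is essentially bookkeeping, and the only thing that needs real care is checking that the numerical margin is genuinely present. The structural reason it works is that $\eta_2$ is taken to be an honest constant ($=1/10$), so that $(1-\eta_2)^2 = 0.81$ stays bounded away from $1$; together with the fact, coming from \eqref{PNT1}, that the mean square of $\lambda_F(p)$ is asymptotically $1$, this forces almost every $|\lambda_F(p)|$ into the window $[1-\eta_2,1+\eta_1)$ once there are few ``large'' primes. The term $16\,|C|$ in the upper estimate — which carries the unfavourable coefficient $16$ — is rendered negligible precisely by the hypothesis $|C| < 10^{-5}\pi(x)$ that expresses the failure of (i). With the stated constants all these checks go through comfortably, so no genuine obstacle arises.
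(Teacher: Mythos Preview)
Your proof is correct and follows essentially the same approach as the paper: both use the identical three-way partition $A\cup B\cup C$ of the primes, bound $\sum_{p\le x}\lambda_F(p)^2$ above termwise by $(1-\eta_2)^2$, $(1+\eta_1)^2$, $16$ respectively, and compare with the PNT lower bound coming from conditions (1) and (3) on $X_0$. The only cosmetic difference is that the paper phrases it as a contradiction (assume both (i) and (ii) fail, bound the sum below $\tfrac{998}{1000}\pi(x)$), whereas you assume only that (i) fails and solve the resulting linear inequality directly for $b$; the arithmetic is the same.
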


\begin{proof}
Let $x_0\ge X_0$ such that $(i)$ and $(ii)$ does not hold. That is suppose $|V_1(1+\eta_1, \bullet ;x_0)|< 10^{-5}\cdot \pi(x_0)$ and $|V_1(1-\eta_2,1+\eta_1;x_0)|<98/100 \cdot\pi(x_0)$.

Now we decompose the sum on the LHS of \eqref{PNT1} into disjoint parts and bound them as follows:
\begin{align*}
\sum_{p\le x_0}\lambda_F(p)^2&=\sum_{p\in V_1(0,1-\eta_2;x_0) }\lambda_F(p)^2+\sum_{p\in V_1(1-\eta_2,1+\eta_1;x_0)}\lambda_F(p)^2+\sum_{p\in V_1(1+\eta_1, \bullet;x_0)}\lambda_F(p)^2\\
&< (1-\eta_2)^2 |V_1(0,1-\eta_2;x_0)| + (1+\eta_1)^2 |V_1(1-\eta_2,1+\eta_1;x_0)|\\
&\q+16 |V_1(1+\eta_1, \bullet;x_0)|.
\end{align*}
For simplicity, let us put 
\[ A:= |V_1(0,1-\eta_2;x_0)|, B:=|V_1(1-\eta_2,1+\eta_1;x_0)|, C:=|V_1(1+\eta_1, \bullet;x_0)| ,\]
so that $A+B+C = \pi(x_0)$. Then 
\begin{align}
\sum_{p\le x_0}\lambda_F(p)^2 &<   (1-\eta_2)^2 \pi (x_0) + B((1+\eta_1)^2 - (1-\eta_2)^2 ) +C(16  -  (1-\eta_2)^2) \nonumber \\
&< \left(  (1-\eta_2)^2 + \frac{98}{100} \left(  (1+\eta_1)^2 - (1-\eta_2)^2 \right) + 10^{-5} \left( 16  -  (1-\eta_2)^2 \right) \right) \pi(x_0)\nonumber \\
&< \frac{998}{1000} \cdot \pi(x_0), \label{ineqcontra}
\end{align}
upon a short calculation. Thus, for any $x$ such that the conditions $(i)$ and $(ii)$ both fail, the RHS is bounded by $998/1000\cdot \pi(x)$. This is clearly a contradiction in view of conditions (1) and (3) in subsection (\ref{subX0}) on choice of $X_0$.
\end{proof}

If condition $(i)$ of \propref{prop:proportion1} is true  for all $x\ge X_0$, then the proof of Theorem \ref{th:dis} is done. But, if  for some $x_0\ge X_0$ only condition $(ii)$ of \propref{prop:proportion1} is true, then we need to look at the sets $V_2(a,b;x_0)$ and $V_3(a,b;x_0)$. To do this we look at the distribution of coefficients $b(p)$ of the standard $L$-function $Z^{st}(F,s)$.

\begin{prop}\label{prop:v_2}
Let $x_0\ge X_0$ be such that the condition (i) of proposition \ref{prop:proportion1} does not hold. Additionally suppose that $\#\{p\le x_0 : |b(p)|>2.1\}>10^{-3}\cdot\pi(x_0)$. Then \[|V_2(1.09,\bullet;x_0)| > 9\times 10^{-4} \cdot \pi(x_0).\]
\end{prop}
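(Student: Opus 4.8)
The plan is to use the relation \eqref{bprelation}, namely $\lambda_F(p)^2 - \lambda_F(p^2) = b(p) + 1 + 1/p$, to transfer the hypothesis on the $b(p)$ being large (in absolute value) to a statement about $\lambda_F(p^2)$ being bounded away from $1$. The key point is that on the set where condition $(i)$ of \propref{prop:proportion1} fails, we have $|\lambda_F(p)| < 1 + \eta_1$ for all but a negligible proportion (at most $10^{-5}\pi(x_0)$) of primes $p \le x_0$, so $\lambda_F(p)^2 < (1+\eta_1)^2$ is tiny above $1$. Rearranging \eqref{bprelation} gives $\lambda_F(p^2) = \lambda_F(p)^2 - b(p) - 1 - 1/p$, so on the set where $|b(p)| > 2.1$ and simultaneously $|\lambda_F(p)| < 1+\eta_1$, we can bound $|\lambda_F(p^2)|$ from below.

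The steps, in order, would be: First, intersect the two relevant sets — the set $S_1 = \{p \le x_0 : |b(p)| > 2.1\}$ (of size $> 10^{-3}\pi(x_0)$ by hypothesis) and the complement of $V_1(1+\eta_1,\bullet;x_0)$ (which, since $(i)$ fails, has size $> \pi(x_0) - 10^{-5}\pi(x_0)$). By inclusion–exclusion their intersection $S$ has size $> 10^{-3}\pi(x_0) - 10^{-5}\pi(x_0) = (10^{-3} - 10^{-5})\pi(x_0) > 9 \times 10^{-4}\pi(x_0)$. Second, for $p \in S$, estimate $|\lambda_F(p^2)| = |\lambda_F(p)^2 - b(p) - 1 - 1/p|$. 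Here I would want a lower bound: note that $b(p)$ is real (it is a Dirichlet coefficient of the standard $L$-function, and by \eqref{bpexp} together with $|\alpha_{i,p}| = 1$ and the fact that the $\alpha_{i,p}$ come in conjugate–reciprocal pairs, $b(p) = 1 + 2\re(\alpha_{1,p}) + 2\re(\alpha_{2,p})$ is real). So either $b(p) > 2.1$ or $b(p) < -2.1$. In the case $b(p) < -2.1$: then $\lambda_F(p)^2 - b(p) - 1 - 1/p > 0 - (-2.1) - 1 - 1/p = 1.1 - 1/p \ge 1.1 - 1/2 $ — wait, this needs care; I should check which sign of $b(p)$ actually forces $\lambda_F(p^2)$ large and handle the constant $1.09$ accordingly. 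In the case $b(p) > 2.1$: $\lambda_F(p^2) = \lambda_F(p)^2 - b(p) - 1 - 1/p < (1+\eta_1)^2 - 2.1 - 1 = \eta_1^2 + 2\eta_1 - 2.1 < -1.09$ for $p$ large enough and $x_0 \ge X_0$ (recall $\eta_1 = 10^{-10}$, and $1/p$ only helps make this more negative), so $|\lambda_F(p^2)| > 1.09$. The small finite set of primes where $1/p$ is not negligible is absorbed into the $X_0$ bookkeeping (condition (2) of \S\ref{subX0}, $\pi(10^4) \le 10^{-6}\pi(x_0)$). Third, conclude: every $p$ in this subset of $S$ lies in $V_2(1.09,\bullet;x_0)$, and the count is still $> 9\times 10^{-4}\pi(x_0)$ after discarding the $O(\pi(10^4))$ exceptional small primes.

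The main obstacle I anticipate is the sign/case analysis together with pinning down the numerical constant: one must verify that the hypothesis threshold $2.1$ on $|b(p)|$, combined with $\lambda_F(p)^2 \le (1+\eta_1)^2 \approx 1$, really does push $|\lambda_F(p^2)|$ past $1.09$ uniformly — and in which direction. The computation $(1+\eta_1)^2 - 2.1 - 1 \approx -2.1$ is comfortably below $-1.09$, so there is plenty of slack when $b(p) > 2.1$; the case $b(p) < -2.1$ gives $\lambda_F(p^2) > 1.1 - 1/p$ which is $> 1.09$ once $p > 100$, again fine after discarding small primes. So the slack is real and the only genuine care needed is in the bookkeeping of which primes get discarded (those in $V_1(1+\eta_1,\bullet;x_0)$, a proportion $< 10^{-5}$, plus the small primes $p \le 10^4$, a proportion $< 10^{-6}$) so that the surviving count still exceeds $9 \times 10^{-4}\pi(x_0)$. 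That arithmetic of proportions — $10^{-3} - 10^{-5} - 10^{-6} > 9\times 10^{-4}$ — is the crux of closing the estimate cleanly.
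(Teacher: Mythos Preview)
Your proposal is correct and follows essentially the same route as the paper: intersect $\{p\le x_0:|b(p)|>2.1\}$ with the complement of $V_1(1+\eta_1,\bullet;x_0)$, discard the primes $p\le 10^4$, use \eqref{bprelation} to force $|\lambda_F(p^2)|>1.09$ on this set, and finish with the count $10^{-3}-10^{-5}-10^{-6}>9\times10^{-4}$. The only cosmetic difference is that the paper handles both signs of $b(p)$ at once via the triangle inequality $|\lambda_F(p^2)|\ge |b(p)|-|1-\lambda_F(p)^2+1/p|$, whereas you split into the cases $b(p)>2.1$ and $b(p)<-2.1$; both yield the same bound.
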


\begin{proof}
From (\ref{bprelation}) we have
\begin{equation} \label{tricky}
|\lambda_F(p^2)|\ge |b(p)|-|1-\lambda_F(p)^2+p^{-1}|.
\end{equation}
Now for $p\not\in V_1(1+\eta_1, \bullet;x)$, note that 
\begin{align}
|1-\lambda_F(p)^2+1/p | \le
\begin{cases}
 1+1/p & \q\text{if }\q\q  |\lambda_F(p)| \le 1;\\
\alpha + 1/p & \q\text{if }\;1< |\lambda_F(p)| \le 1+\eta_1.
\end{cases}
\end{align}
In the second inequality above we have put 
$\lambda_F(p)^2 = 1+\alpha$ and an easy calculation shows that $0<\alpha < 10^{-9}$.  Thus it follows from \eqref{tricky} that
\begin{equation}
|\lambda_F(p^2)| > |b(p)|-1-1/p.
\end{equation}

Let us put $A(x):= \{p\le x : |b(p)|>2.1\}$. Moreover, if $p\in A(x)$ (and $p>10^4$)  we have
\begin{equation}
|\lambda_F(p^2)| > 2.1-1-p^{-1}>1.09.
\end{equation}
These observations suffice to finish the proof as follows. From our two hypotheses in the statement of \propref{prop:v_2} it follows that
\begin{equation} \label{av1}
|A(x_0)| > 10^{-3} \pi(x_0) ; \q  |V_1(1+\eta_1, \bullet;x_0)| < 10^{-5} \pi(x_0).
\end{equation}
From the above calculations and \eqref{av1} we then conclude (putting $B^c = \text{ `complement' of } B$)
\begin{equation}
A(x_0) \cap V_1(1+\eta_1, \bullet;x_0)^{c}\setminus\mc P(10^4) \subset V_2(1.09,\bullet;x_0),
\end{equation}
where $\mc P(10^4)$ is the set of primes $\le 10^4$. By our choice of $X_0$, we have $\pi(10^4)\le \pi(x_0)/10^6$. Therefore
\begin{equation}
|V_2(1.09,\bullet;x_0)| \ge |A(x_0)| - |V_1(1+\eta_1, \bullet;x_0)|-\pi(10^4) \ge (10^{-3} - 10^{-5}-10^{-6} ) \pi(x_0),
\end{equation}
which immediately gives the lemma.
\end{proof}

Now we prove a result regarding the coefficients $b(p)$ of $Z^{st}(F,s)$.

\begin{prop}\label{prop:bp1/18}
Let $x_0\ge X_0$ be such that $\#\{p\le x_0 : |b(p)|>2.1\}\le 10^{-3}\cdot\pi(x_0)$. Then\qq
$\#\{p\le x_0 : 6/7\le |b(p)|\le 2.1\}>\frac{1}{16}\cdot\pi(x_0)$.
\end{prop}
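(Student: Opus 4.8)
The plan is to argue exactly as in \propref{prop:proportion1}, but now using the prime number theorem \eqref{pntstd} for the coefficients $b(p)$ of the standard $L$-function instead of \eqref{PNT1}. The point is that if both of the ``dangerous'' sets were too small, i.e. if few primes had $|b(p)|>2.1$ \emph{and} few primes had $|b(p)|\ge 6/7$, then almost all primes $p\le x_0$ would satisfy $|b(p)|<6/7$, and the resulting upper bound for $\sum_{p\le x_0} b(p)^2$ would contradict \eqref{pntstd} in combination with conditions (1) and (3) on the choice of $X_0$.

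Concretely, first I would split the set of primes up to $x_0$ into the three disjoint pieces
\[
\mc P_1 = \{p\le x_0 : |b(p)|<6/7\},\q
\mc P_2 = \{p\le x_0 : 6/7\le |b(p)|\le 2.1\},\q
\mc P_3 = \{p\le x_0 : |b(p)|>2.1\},
\]
with cardinalities $A,B,C$ summing to $\pi(x_0)$. Using the trivial bound $|b(p)|\le 5$ from \eqref{lamest} on $\mc P_3$, and $|b(p)|\le 2.1$ on $\mc P_2$, together with $|b(p)|<6/7$ on $\mc P_1$, I obtain
\[
\sum_{p\le x_0} b(p)^2 < (6/7)^2 A + (2.1)^2 B + 25\, C
= (6/7)^2 \pi(x_0) + B\big((2.1)^2 - (6/7)^2\big) + C\big(25 - (6/7)^2\big).
\]
Now suppose, for contradiction, that the conclusion fails, i.e. $B\le \tfrac{1}{16}\pi(x_0)$; the hypothesis of the proposition already gives $C\le 10^{-3}\pi(x_0)$. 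Substituting these into the above yields $\sum_{p\le x_0} b(p)^2 < \theta\cdot\pi(x_0)$ for some explicit constant $\theta$, and a short arithmetic check should show $\theta < 999/1000$. Since $(6/7)^2 = 36/49 \approx 0.735$, one has $\theta \approx 0.735 + \tfrac{1}{16}(4.41 - 0.735) + 10^{-3}(25 - 0.735) \approx 0.735 + 0.230 + 0.024 \approx 0.989 < 0.999$, so the bound holds with room to spare. Combined with condition (3) (which gives $\tfrac{999}{1000}\pi(x_0)\le x_0/\log x_0$) and condition (1) (which guarantees the main term in \eqref{pntstd} is close to $x_0/\log x_0$, so that $\sum_{p\le x_0} b(p)^2 \ge (1-10^{-6})\cdot x_0/\log x_0 \ge \tfrac{999}{1000}\pi(x_0)$ for $x_0\ge X_0$), this is a contradiction, which proves the proposition.

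I do not anticipate any real obstacle here: the argument is structurally identical to that of \propref{prop:proportion1}, and the only thing to verify is that the numerical constants $6/7$, $2.1$, $1/16$, $10^{-3}$ are chosen so that the final inequality $\theta < 999/1000$ is satisfied --- which, from the estimate above, it comfortably is. The mild care needed is simply to make sure one uses $|b(p)|\le 5$ (and not a sharper but unavailable bound) on the set $\mc P_3$, and to keep the inequalities strict where \eqref{pntstd} forces them to be; this is routine.
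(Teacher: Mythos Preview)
Your proposal is correct and follows essentially the same argument as the paper: both split the primes into the three ranges $|b(p)|<6/7$, $6/7\le|b(p)|\le 2.1$, $|b(p)|>2.1$, bound $\sum_{p\le x_0} b(p)^2$ accordingly, and use the hypotheses $B\le\tfrac{1}{16}\pi(x_0)$, $C\le 10^{-3}\pi(x_0)$ to obtain a numerical upper bound (the paper gets $<\tfrac{989}{1000}\pi(x_0)$, matching your $\theta\approx 0.989$) that contradicts \eqref{pntstd} via conditions (1) and (3) on $X_0$. The only cosmetic slip is that $(1-10^{-6})\cdot\tfrac{999}{1000}$ is just shy of $\tfrac{999}{1000}$, but this is immaterial since $0.989$ is well below either.
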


\begin{proof}
We argue in the same way as in proposition \ref{prop:proportion1}. First we decompose the LHS of (\ref{pntstd}) into disjoint sums as follows:
\begin{equation}
\sum_{p\le x_0}b^2(p)=\underset{0\le |b(p)|<6/7}{\sum_{p\le x_0}}b^2(p)+\underset{6/7\le |b(p)|\le 2.1}{\sum_{p\le x_0}}b^2(p)+\underset{ 2.1<|b(p)|\le 5}{\sum_{p\le x_0}}b^2(p).
\end{equation}
As in the proof of proposition \ref{prop:proportion1}, let $A$, $B$ and $C$ denote the cardinality of the sets in the first, second and third terms of the RHS, respectively. Thus we have $A+B+C=\pi(x_0)$ and we get
\begin{align*}
\sum_{p\le x_0}b^2(p)&\le \frac{36}{49}(\pi(x_0)-B-C)+4.41\cdot B+25\cdot C\\
&=\frac{36}{49}\cdot \pi(x_0)+(4.41-\frac{36}{49})\cdot B+(25-\frac{36}{49})\cdot C.
\end{align*}
From our assumption we have, $C\le 10^{-3}\cdot \pi(x_0)$. Now, if the conclusion of the proposition is not true, then $B\le \frac{1}{16}\cdot \pi(x_0)$ and  we have
\begin{align*}
\sum_{p\le x_0}b^2(p)&\le \left(\frac{36}{49}+3.68\cdot\frac{1}{16}+24.27\cdot\frac{1}{10^3}\right)\cdot \pi(x_0)\\
&<\frac{989}{1000}\cdot\pi(x_0).
\end{align*}
A clear contradiction to (\ref{pntstd}) by our choice of $X_0$.
\end{proof}

\begin{prop}\label{prop:v_3}
Let $x_0\ge X_0$ be such that the condition (1) of proposition \ref{prop:proportion1} does not hold. Additionally suppose that $\#\{p\le x_0 : |b(p)|>2.1\}\le 10^{-3}\cdot\pi(x_0)$. Then \[| V_3(1.02, \bullet;x_0)|>\frac{1}{25}\cdot \pi(x_0).\]
\end{prop}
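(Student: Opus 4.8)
The plan is to combine the previous proposition (Proposition \ref{prop:bp1/18}), which under the current hypothesis produces a positive-proportion set of primes with $6/7 \le |b(p)| \le 2.1$, with the Hecke relation \eqref{bprelation} and the recursion \eqref{heckerelations} to force $|\lambda_F(p^3)|$ to be large on a positive proportion of primes. First I would invoke Proposition \ref{prop:bp1/18}: since $\#\{p \le x_0 : |b(p)| > 2.1\} \le 10^{-3}\pi(x_0)$, we get $\#\{p \le x_0 : 6/7 \le |b(p)| \le 2.1\} > \tfrac{1}{16}\pi(x_0)$. On this set, \eqref{bprelation} gives $\lambda_F(p)^2 - \lambda_F(p^2) = b(p) + 1 + 1/p$, so $\lambda_F(p)^2 - \lambda_F(p^2)$ is bounded away from small values; combined with the failure of condition (i) of Proposition \ref{prop:proportion1}, i.e.\ $|V_1(1+\eta_1,\bullet;x_0)| < 10^{-5}\pi(x_0)$, we may restrict to primes with $|\lambda_F(p)| \le 1 + \eta_1$ (discarding only $10^{-5}\pi(x_0)$ primes, and also the $\le \pi(10^4) \le 10^{-6}\pi(x_0)$ small primes), so that the quantity $\lambda_F(p)^2 - \lambda_F(p^2) - 1/p$ which appears in \eqref{heckerelations} is controlled from both sides.

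Next I would write out \eqref{heckerelations} for $n = 3$: using $\lambda_F(1) = 1$, $\lambda_F(p^{-1}) = 0$, this reads
\begin{equation}
\lambda_F(p^3) = \lambda_F(p)\bigl(\lambda_F(p^2) + 1\bigr) - \lambda_F(p)\Bigl(\lambda_F(p)^2 - \lambda_F(p^2) - \tfrac{1}{p}\Bigr) = \lambda_F(p)\Bigl(2\lambda_F(p^2) - \lambda_F(p)^2 + 1 + \tfrac1p\Bigr).
\end{equation}
Now substitute $\lambda_F(p)^2 - \lambda_F(p^2) = b(p) + 1 + 1/p$ from \eqref{bprelation} to eliminate $\lambda_F(p^2)$ in favour of $b(p)$ and $\lambda_F(p)^2$: one finds $2\lambda_F(p^2) - \lambda_F(p)^2 + 1 + 1/p = \lambda_F(p)^2 - 2b(p) - 1 - 1/p$, hence
\begin{equation}
\lambda_F(p^3) = \lambda_F(p)\Bigl(\lambda_F(p)^2 - 2b(p) - 1 - \tfrac1p\Bigr).
\end{equation}
This is the key algebraic identity; after it, the argument becomes a case analysis on the sizes of $|\lambda_F(p)|$ and $b(p)$ over the positive-proportion set from Proposition \ref{prop:bp1/18}. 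The idea is that on that set $|b(p)| \ge 6/7$, so $|2b(p)| \ge 12/7$, while $\lambda_F(p)^2 \le (1+\eta_1)^2$ is close to $1$ and $1 + 1/p$ is close to $1$; thus $|\lambda_F(p)^2 - 2b(p) - 1 - 1/p|$ is bounded below by a constant strictly exceeding $1$ \emph{provided} $|\lambda_F(p)|$ is not too small — and if $|\lambda_F(p)|$ \emph{is} small, then from \eqref{bprelation} $|\lambda_F(p^2)| = |\lambda_F(p)^2 - b(p) - 1 - 1/p|$ is itself forced to be large (close to $|b(p)| + 1 \ge 13/7$), so $|\lambda_F(p^2)|$ is large instead. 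Either way one obtains, on a subset of proportion at least $\tfrac{1}{16} - 10^{-5} - 10^{-6} > \tfrac{1}{25}$ of primes $\le x_0$, that $\max\{|\lambda_F(p^2)|, |\lambda_F(p^3)|\}$ exceeds some explicit constant $> 1$; carefully tracking the constants should yield the threshold $1.02$ for $\lambda_F(p^3)$ on the claimed proportion $\tfrac{1}{25}\pi(x_0)$, possibly after also feeding back into Proposition \ref{prop:v_2}-type bounds for the $\lambda_F(p^2)$ alternative.

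The main obstacle I anticipate is the bookkeeping of explicit constants so that the product $|\lambda_F(p)| \cdot |\lambda_F(p)^2 - 2b(p) - 1 - 1/p|$ genuinely clears $1.02$: the factor $|\lambda_F(p)|$ can be as large as roughly $1$ but the second factor, while comfortably bigger than $1$ when $b(p) \approx 2.1$, is only about $|{-2\cdot\tfrac67} + \text{(small)}| \approx 0.7$ when $b(p) \approx -6/7$ with the signs working against us — so a naive bound fails and one must split into subcases according to the sign of $b(p)$ and the size of $|\lambda_F(p)|$, using the $\lambda_F(p^2)$ alternative precisely in the regime where the $\lambda_F(p^3)$ bound degrades. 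Making these regimes cover the whole positive-proportion set from Proposition \ref{prop:bp1/18}, with none of the cuts costing more than the slack $\tfrac{1}{16} - \tfrac{1}{25}$, is the delicate part; everything else is routine substitution and the already-established $X_0$ estimates.
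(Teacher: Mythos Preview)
Your derivation of the identity
\[
\lambda_F(p^3) = \lambda_F(p)\bigl(\lambda_F(p)^2 - 2b(p) - 1 - \tfrac{1}{p}\bigr)
\]
is correct and is exactly the Hecke relation the paper uses (in the equivalent form $\lambda_F(p^3) = \lambda_F(p)(2\lambda_F(p^2) - \lambda_F(p)^2 + 1 + 1/p)$). But there is a genuine gap in your plan, and it is precisely the ``main obstacle'' you yourself flag: you have no lower bound on $|\lambda_F(p)|$, and without one the factor $\lambda_F(p)$ out front kills the estimate. Your proposed workaround --- falling back to a bound on $|\lambda_F(p^2)|$ when $|\lambda_F(p)|$ is small --- cannot rescue the argument, because the proposition asserts something about $V_3$ alone, not about $\max\{|\lambda_F(p^2)|,|\lambda_F(p^3)|\}$. (And even for that weaker statement your fallback only works for one sign of $b(p)$: if $\lambda_F(p)\approx 0$ and $b(p)\approx -6/7$, then $\lambda_F(p^2)\approx -1/7$, which is not large.)

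The missing idea is that Proposition~\ref{prop:proportion1} is a dichotomy: since condition~(i) fails at $x_0$, condition~(ii) \emph{holds}, i.e.\ $|V_1(1-\eta_2,1+\eta_1;x_0)| \ge \tfrac{98}{100}\pi(x_0)$. You used only the upper bound $|\lambda_F(p)| < 1+\eta_1$ coming from the failure of (i), but the paper intersects with $V_1(1-\eta_2,1+\eta_1;x_0)$ to get the two-sided bound $0.9 \le |\lambda_F(p)| < 1+\eta_1$. With this in hand, $|\lambda_F(p)^2 - 1 - 1/p| \le 0.19 + 1/p$, and your own identity gives directly
\[
|\lambda_F(p^3)| \ge 0.9\Bigl(2\cdot\tfrac{6}{7} - 0.19 - \tfrac{1}{p}\Bigr) > 1.02
\]
for $p > 10^4$, with no case analysis needed. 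The counting is then $|\{|b(p)|\ge 6/7\}| + |V_1(1-\eta_2,1+\eta_1;x_0)| - \pi(x_0) - \pi(10^4) > (\tfrac{1}{16} + \tfrac{98}{100} - 1 - 10^{-6})\pi(x_0) > \tfrac{1}{25}\pi(x_0)$.
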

\begin{proof}
We again make use of the following inequality from (\ref{bprelation}).
\begin{equation*}
|\lambda_F(p^2)|\ge |b(p)|-|1-\lambda_F(p)^2+p^{-1}|.
\end{equation*}
For $p\in V_1(1-\eta_2,1+\eta_1;x)$, since $\eta_2=\frac{1}{10}$, we have
\begin{align}
|1-\lambda_F(p)^2+1/p | \le
\begin{cases}
 19/100+1/p & \q\text{if }\;(1-\eta_2)\le |\lambda_F(p)| \le 1;\\
\alpha +1/p & \q\text{if }\q \qq 1< |\lambda_F(p)| < 1+\eta_1,
\end{cases}
\end{align}
where $0<\alpha<10^{-9}$. Thus for $p\in V_1(1-\eta_2,1+\eta_1;x)\cap \{p\le x : |b(p)|\ge 6/7\}$ we have
\begin{equation*}
|\lambda_F(p^2)|\ge \frac{6}{7}-\frac{19}{100}-\frac{1}{p}.
\end{equation*}
Again choosing $p$ large enough ($p>10^4$) we get that $|\lambda_F(p^2)|\ge 0.667>2/3$ and we have
\begin{equation}\label{p22/3set}
V_1(1-\eta_2,1+\eta_1;x)\cap \{p\le x : |b(p)|\ge 6/7\} \setminus\mc P(10^4)\subseteq V_2(2/3,\bullet;x).
\end{equation}

Now from the Hecke relations (see (\ref{heckerelations})) we have
\begin{equation}
\lambda_F(p^3)=\lambda_F(p)\left(2\lambda_F(p^2)-\lambda_F(p)^2+1+\frac{1}{p}\right)
\end{equation}
and if $p\in V_2(2/3,\bullet;x_0)\cap V_1(1-\eta_2,1+\eta_1;x_0)$, we have
\begin{align*}
|\lambda_F(p^3)|&\ge (1-\eta_2)\left||2\lambda_F(p^2)|-|\lambda_F(p)^2-1-\frac{1}{p}|\right|\\
&> \frac{9}{10}\left( \frac{4}{3}-\frac{19}{100}-\frac{1}{p}\right)\\
&> 1.02.
\end{align*}
Combining this with (\ref{p22/3set}), gives us the following inclusions. 
\begin{align}
V_3(1.02, \bullet;x_0)&\supseteq V_2(2/3,\bullet;x_0)\cap V_1(1-\eta_2,1+\eta_1;x_0)\nonumber\\
&\supseteq V_1(1-\eta_2,1+\eta_1;x_0)\cap \{p\le x_0 : |b(p)|\ge 6/7\}\setminus\mc P(10^4).
\end{align}
Now since condition (1) of proposition \ref{prop:proportion1} does not hold for $x_0$, $V_1(1-\eta_2,1+\eta_1;x_0)\ge\frac{98}{100}\cdot \pi(x_0)$ and from proposition \ref{prop:bp1/18}, we have $\#\{p\le x_0 : |b(p)|\ge 6/7\}>\frac{1}{16}\cdot\pi(x_0)$. Thus
\begin{equation}\label{p22/3}
|V_3(1.02, \bullet;x_0)|\ge \left(\frac{98}{100}+\frac{1}{16}-1-\frac{1}{10^{6}}\right)\cdot \pi(x_0)>\frac{1}{25}\cdot\pi(x_0).\qedhere
\end{equation}
\end{proof}
\textit{Proof of \thmref{th:dis}:} Fix an $x\ge X_0$. Now choose $c=1+\eta_1$ (which is the smallest among $1+\eta_1$, $1.09$ and $1.02$) and $\delta=10^{-5}$ (which is the smallest among $10^{-5}$, $9\times 10^{-4}$ and $1/25$). Note here that both $c$ and $\delta$ are independent of $x$.

From propositions \ref{prop:proportion1}, \ref{prop:v_2} and \ref{prop:v_3}, for each large enough $x$, we get an $l_x\in\{1,2,3\}$ such that $V_{l_x}(c,\bullet;x)> \delta\cdot\pi(x)$. Also note that for any $p\in V_{l_x}(c,\bullet;x)$,
\begin{equation}
\max\{|\lambda_F(p^i)|: i=1,2,3\}\ge |\lambda_F(p^{l_x})|\ge c.
\end{equation}
Thus for any $x\ge X_0$, there exists an $l_x\in\{1,2,3\}$ such that 
\begin{equation}
\{p\le x : \max\{|\lambda_F(p^i)|: i=1,2,3\}\ge c\}\supseteq V_{l_x}(c,\bullet;x).
\end{equation}
This completes the proof of \thmref{th:dis} since both $c$ and $\delta$ are independent of $x$.
\begin{rmk}
Note here that the numerical values used in this section are not optimized. This is because it does not improve the Omega result that we are after.
\end{rmk}

\section{Proof of Theorem \ref{th:omega}}
By corollary \ref{cor:dis} of Theorem \ref{th:dis}, there exist constants $C>1$, $\delta>0$ and an integer $1\le r\le 3$ (depending on $N$) such that
\begin{equation}\label{fixr}
\#\{p\le N : |\lambda(p^r)|\ge C\}>\delta\cdot \pi(N),
\end{equation} 
for infinitely many integers $N$. Fix the integer $r$ from (\ref{fixr}) and denote the set on the LHS by $B_N$. We now use standard techniques (see \cite{murty1983oscillations} for similar arguments) to prove the $\Omega_{\pm}$ result. Let $B_N^{+}=\{p\le N : \lambda(p^r)\ge C\}\subset B_N$ and $B_N^{-}=\{p\le N : \lambda(p^r)\le -C\}\subset B_N$. Since $B_N>\delta \cdot \pi(N)$, either $B_N^{+}>\delta_1\cdot\pi(N)$ for some $\delta_1>0$ or $B_N^{-}>\delta_2\cdot\pi(N)$ for some $\delta_2>0$. 

If $B_N^{+}>\delta_1\cdot\pi(N)$, choose an integer $n$ as follows.
\begin{equation}\label{eq:n}
n=\prod_{p\in B_N^{+}}p^r.
\end{equation}
Note here that $r$ varies with $N$ and thus $n$. But this is not a cause of concern since $r\le 3$.

With this choice of $n$, we have
\begin{equation}
\lambda_F(n)=\prod_{p\in B_N^+}\lambda_F(p^r).
\end{equation}
Thus
\begin{equation*}
\lambda_F(n)\ge C^{|B_N^+|}> C^{\delta_1\cdot \pi(N)}\ge C^{\delta_1 c_1\cdot \frac{N}{\log N}}=\exp\left(c_0 \frac{N}{\log N}\right),
\end{equation*}
where we choose constants $c_1$ and $c_2$ such that $c_1\frac{N}{\log N}\le \pi(N)\le c_2\frac{N}{\log N}$ for all large $N$. Now from (\ref{eq:n}) we have
\begin{equation}\label{eq:nx}
\log n=r\sum_{p\in B_N^+}\log p\le r\sum_{p\le N}\log p\sim rN.
\end{equation}
Also note that
\begin{equation*}
\log n\ge r\log 2\cdot |B_N^+|\gg \frac{N}{\log N},
\end{equation*}
from which we get, $\log N\ll \log\log n$. Hence for some constant $c$,
\begin{equation}
\lambda_F(n)\gg\exp\left(\frac{c \log n}{\log \log n}\right).
\end{equation}
If $B_N^{-}>\delta_2\cdot\pi(N)$, we take $n$ to be product of even number of primes in $B_N^{-}$ and proceed as above.

Now to prove the $\Omega_{-}$ result consider the following. If $B_N^{+}>\delta_1\cdot\pi(N)$, then we proceed as follows. We know that there exists a $n_0\in \mbf Z$ such that $\lambda_F(n_0)<0$ (see \cite{kohnen2007sign}). Now let 
\begin{equation}\label{eq:n-}
n=n_0\underset{(p,n_0)=1}{\prod_{p\in B_N^+}}p^r.
\end{equation}
Thus $\lambda_F(n)=\lambda_F(n_0)\underset{(p,n_0)=1}{\prod_{p\in B_N^+}}\lambda_F(p^r)$. Now proceeding as above we get
\begin{equation}
-\lambda_F(n)\gg \exp\left(\frac{c \log n}{\log \log n}\right).
\end{equation}
If $B_N^{-}>\delta_2\cdot\pi(N)$, we take $n$ to be product of odd number of primes in $B_N^{-}$ and proceed as above. This completes the proof of Theorem \ref{th:omega}.

\end{document}